\newcommand{\xa}{{\ref{3}}}
\newcommand{\xb}{{\ref{4}}}
\newcommand{\xc}{{\ref{5}}}
\newcommand{\xd}{{\ref{6}}}
\newcommand{\ya}{{\ref{a}}}
\newcommand{\yb}{{\ref{b}}}
\newcommand{\yc}{{\ref{c}}}
\newcommand{\yd}{{\ref{d}}}
\newcommand{\za}{{\ref{za}}}
\newcommand{\zb}{{\ref{zb}}}
\newcommand{\zc}{{\ref{zc}}}
\newcommand{\zd}{{\ref{zd}}}
\newcommand{\ze}{{\ref{ze}}}
\newcommand{\zf}{{\ref{zf}}}
\newcommand{\bfn}{{\mathbf{n}}}
\newcommand{\bfe}{{\mathbf{e}}}
\newcommand{\ms}{{multisemisimplicial}}
\numberwithin{equation}{section}
\newtheorem{thm}{Theorem}[section]
\newcounter{numerierer}
\newcounter{leer}
\newtheorem{defn}[thm]{Definition}
\newtheorem{prop}[thm]{Proposition}
\newtheorem{lemma}[thm]{Lemma}
\theoremstyle{definition}  
\newenvironment{definition}{\begin{defn}\rm}{\end{defn}}
\newtheorem{set theory}[thm]{Set Theoretic Prelude}
\newtheorem{remark}[thm]{Remark}
\subjclass{}
\begin{document}

\title{On semisimplicial sets satisfying the Kan condition}
\author{James E.\ McClure}

\address{ Department of Mathematics, Purdue University, 150 N.\ University
Street, West Lafayette, IN 47907-2067}

\thanks{The author was partially supported by NSF grants. He
thanks the Lord for making his work possible.}

\subjclass[2000]{Primary 55P43; Secondary 57R67, 57P10}

\date{October 19, 2012}

\begin{abstract}
A semisimplicial set has face maps but not degeneracies.  A basic fact, due 
to Rourke and Sanderson, is that a semisimplicial set satisfying the Kan 
condition can be given a simplicial structure.  The present paper gives a 
combinatorial proof of this fact and a generalization to multisemisimplicial 
sets.  
\end{abstract}
\maketitle

\section{Introduction}

A semisimplicial set
$X$ is a sequence of sets $X_n$ for $n\geq 0$ with maps 
$d_i:X_n\to X_{n-1}$ for $0\leq i\leq n$ satisfying
\begin{equation}
\label{e1}
d_id_j=d_{j-1}d_i \quad\text{if\, $i<j$}.
\end{equation}
Elements of $X_n$ are called {\it $n$-simplices}, and the maps $d_i$ are 
called {\it face maps}.

Semisimplicial sets occur in various areas of mathematics, especially
geometric topology, surgery theory (e.g., \cite{MR1211640}), and 
homological algebra (e.g., \cite{WEIB,Schwede}).

\begin{definition}
A semisimplicial set $X$ satisfies the {\it Kan condition} if, for every
collection of $n+1$ $n$-simplices $x_0,\ldots,x_{k-1},x_{k+1},\ldots,x_{n+1}$ 
satisfying
\[
d_ix_j=d_{j-1}x_i\quad\text{whenever $i<j$ with $i\neq k\neq j$},
\]
there is an $n+1$ simplex $x$ with $d_ix=x_i$ for all $i\neq k$.
\end{definition}

In \cite{RS}, Rourke and Sanderson used PL topology to prove that a 
semisimplicial set which satisfies the Kan condition has a simplicial 
structure:

\begin{thm}\cite[Theorem 5.7]{RS}
\label{1}
Let $X$ be a semisimplicial set satisfying the Kan condition.  Then there are
functions
\[
s_j:X_n\to X_{n+1}, 
\]
for $n\geq 0$ and $0\leq j\leq n$,
with the following properties.
\begin{alignat}{2}
\label{3}
d_is_j&=s_{j-1}d_i &\quad&\textrm{if\,  $i<j$}.\\
\label{4}
d_is_jx&=x &\quad&\textrm{if\,  $i=j,j+1$}.\\
\label{5}
d_is_j&=s_jd_{i-1} &\quad&\textrm{if\,  $i>j+1$}.\\
\label{6}
s_js_i&=s_is_{j-1} &\quad&\textrm{if\,  $i<j$}.
\end{alignat}
\end{thm}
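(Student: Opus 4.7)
The plan is an induction on $n$, with each $s_j$ realized as a face of a suitable Kan filler.

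\textbf{Base case, $n=0$.} Given $x \in X_0$, use the Kan condition to produce $y \in X_1$ with $d_1 y = x$; then fill the $2$-horn missing its $2$-face with $x_0 = x_1 = y$---the sole compatibility condition $d_0 x_1 = d_0 x_0$ is automatic---to obtain $\sigma \in X_2$, and set $s_0 x := d_2 \sigma$. The semisimplicial relations $d_0 d_2 = d_1 d_0$ and $d_1 d_2 = d_1 d_1$ then force $d_0 s_0 x = d_1 s_0 x = x$.

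\textbf{Inductive step.} Suppose $s_j$ has been defined on $X_m$ for $m < n$ and satisfies (3)--(6) in those dimensions. For $x \in X_n$, the four identities prescribe all $n+2$ faces of the desired $s_j x \in X_{n+1}$: namely $s_{j-1} d_i x$ for $i < j$, $x$ for $i = j, j+1$, and $s_j d_{i-1} x$ for $i > j+1$. Since the Kan condition only permits prescription of $n+1$ faces, we work one dimension higher, mimicking the base case. The strategy is: (a) build $y \in X_{n+1}$ via Kan, agreeing with the prescribed values at all faces except one (compatibility of the prescribed values reduces, via the semisimplicial relations $d_i d_k = d_{k-1} d_i$, to instances of (3)--(6) in dimensions below $n$, available by induction); (b) fill an $(n+2)$-horn modelled on the ``double degeneracy'' $s_j s_j x$, whose faces at positions $i < j$ and $i > j+2$ are $s_{j-1} s_{j-1} d_i x$ and $s_j s_j d_{i-2} x$ respectively (available by induction), with $y$ placed in two of the three central positions $j, j+1, j+2$ and the third omitted; (c) define $s_j x$ to be this missing face. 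The identities (3)--(6) for $s_j x$ are then extracted by applying $d_i d_k$ to the auxiliary $(n+2)$-simplex and invoking the semisimplicial relations for $d$.

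\textbf{Main obstacle.} The real work is combinatorial bookkeeping: at each step one must select precisely which face to omit in each horn, and in which positions to place $y$, so that (i) every pairwise compatibility $d_a x_b = d_{b-1} x_a$ of the horn data reduces to an already-established identity, and (ii) all four identities (3)--(6) in dimension $n$ are simultaneously forced by the face relations of the single auxiliary $(n+2)$-simplex. Since Kan fillers are not canonical, (3)--(6) cannot be verified by direct comparison of formulas; they must be \emph{engineered} into the construction. The degeneracy-degeneracy relation (6), which couples $s_j$ across different dimensions, is the most delicate, because it must be arranged consistently between the dimension $n$ just treated and the dimensions already handled by the inductive hypothesis.
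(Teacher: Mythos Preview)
Your two-Kan-fillers idea and the use of an auxiliary $(n+2)$-simplex are indeed the heart of the paper's argument, but as written your scheme has two genuine gaps.

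First, the horn data you propose for the $(n+2)$-simplex---$s_{j-1}s_{j-1}d_ix$ for $i<j$ and $s_js_jd_{i-2}x$ for $i>j+2$---are not available under induction on $n$ alone: the outer degeneracy in each expression is applied to an element of $X_n$, which is exactly the dimension you are trying to handle. In particular $s_js_jd_{i-2}x$ needs $s_j$ on $X_n$, the very map under construction. The paper avoids this circularity by introducing a separate auxiliary operator $T_j:X_n\to X_{n+2}$ (heuristically $s_{j+1}s_0$, not $s_js_j$) with its own inductive construction; the faces of $T_kx$ are $T_{k-1}d_{j-1}x$ and $T_kd_{j-2}x$, which only involve $T$ on strictly lower-dimensional inputs, and one sets $s_kx=d_0T_kx$.

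Second, and more seriously, identity (\ref{6}) cannot be ``engineered'' by any choice of fillers. For $y\in X_{n-1}$ and $i<k$, the two sides $s_k(s_iy)$ and $s_i(s_{k-1}y)$ are, in your scheme, outputs of \emph{independent} Kan fillers attached to distinct inputs $s_iy$ and $s_{k-1}y$; there is no single $(n+2)$-simplex whose face relations force them to coincide. The paper's resolution is a case split combined with a double induction on $(\deg x,\,k)$: if $x$ is already in the image of some $s_j$ with $j<k$, say $x=s_jw$, one \emph{defines} $s_kx:=s_js_{k-1}w$, so that (\ref{6}) holds by fiat; the Kan-filler construction via $T_k$ is invoked only when $x$ is not in the image of any earlier $s_j$, in which case (\ref{6}) is vacuous. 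Without this split, your sketch cannot close.
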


Note that (\xd) is written in a slightly nonstandard form which is
equivalent to the usual one.  Also note that the theorem does not say that the
simplicial structure on $X$ is unique, nor does it give a functor
from Kan semisimplicial sets to simplicial sets.

The purpose of this note is to generalize Theorem \ref{1} to
multisemisimplicial sets, for use in \cite{LM13}.  It is not at all clear how 
to generalize the
geometric argument given by Rourke and Sanderson, so instead I will give a
combinatorial proof of Theorem \ref{1} which generalizes easily to the
multisemisimplicial setting.

The organization of the paper is as follows.  
The proof of Theorem \ref{1} is a
double induction which is carried out in Sections \ref{s2} and \ref{s3}.  An
auxiliary lemma is proved in Section \ref{s3.5}. 
The
generalization of Theorem \ref{1} to multisemisimplicial sets is stated in 
Section \ref{s4} and proved in Sections \ref{s5}--\ref{s7}.
Section \ref{s8} gives an application of Theorem \ref{1}, using it to give a
new proof of \cite[Corollary 5.4]{RS}.

\smallskip
I would like to thank Stefan Schwede for a helpful conversation.

\medskip

{\bf A note on terminology.}  The first appearance of 
semisimplicial sets in the literature was in \cite{EZ}, where they were called
semisimplicial complexes. The motivation for the name is that an ordered 
simplicial complex has two properties: (i) there are face maps satisfying
\eqref{e1} and (ii) a simplex is determined by its faces; a ``semisimplicial
complex'' has only the first property.  The same paper introduced the concept 
of ``complete semisimplicial complexes,'' in which the word ``complete'' 
referred to the presence of degeneracy maps.  During the 1950's and 1960's it 
became common to drop the word ``complete'' from the terminology (probably 
because all of the known applications required degeneracies) and to use the 
term semisimplicial complex to mean what is now called a simplicial set.

Semisimplicial complexes in the original sense were resurrected and renamed 
(as $\Delta$-sets) in \cite{RS}.  The name $\Delta$-set seems infelicitous 
because the category that governs simplicial sets is called $\Delta$.   In 
using the term semisimplicial set I am following the terminology of 
\cite[Definition 8.1.9]{WEIB}.





\section{Beginning of the proof of Theorem \ref{1}}
\label{s2}

We will construct the degeneracies $s_jx$ by a double induction on $\deg(x)$
and $j$.  Specifically, given $n\geq
0$ and $0\leq k\leq n$, we assume that $s_jx$ has been constructed
when $\deg(x)<n$ and also when $\deg(x)=n$ and $j<k$, and that properties
(\xa)--(\xd) hold in all relevant cases (that is, in all cases involving only
degeneracies that have already been constructed).  Let $x\in X_n$; we need to
construct $s_kx$.

There are two cases.  The easier case (which does not
use the Kan condition) is when $x$ is in the image of $s_j$ for some $j<k$.
We give the proof for this case in this section.

Choose the {\it smallest} $j$ for which $x$ is in the image of $s_j$; then
\[
x=s_j w
\]
for some $w$.  In fact this $w$ is unique, by the following lemma.

\begin{lemma}
\label{l1}
Degeneracy maps are monomorphisms.
\end{lemma}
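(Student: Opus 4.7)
The plan is very short: the lemma is essentially a one-line consequence of the simplicial identity (\xb), which says $d_i s_j x = x$ whenever $i=j$ or $i=j+1$. In particular, taking $i=j$, the map $d_j$ is a left inverse of $s_j$, and any map with a left inverse is injective.

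So the proof I would write goes: Suppose $s_j x = s_j y$. Apply $d_j$ to both sides. By property (\xb) (applied to whichever degeneracies are under consideration; in the inductive context of Section \ref{s2} this is part of the inductive hypothesis, since $d_j s_j = \mathrm{id}$ falls among the already-verified relations once $s_j$ has been constructed), we get $x = d_j s_j x = d_j s_j y = y$.

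The only subtle point, and it is not really an obstacle, is that the statement is being invoked inside a double induction in which the $s_j$'s are being built one at a time. So when the lemma is used to conclude that the $w$ with $x = s_j w$ is unique, one must check that property (\xb) has already been established for the relevant $s_j$; this is indeed part of the inductive hypothesis stated at the beginning of Section \ref{s2} (all of (\xa)--(\xd) are assumed to hold in cases involving only degeneracies that have already been constructed). No use of the Kan condition is needed.
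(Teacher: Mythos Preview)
Your proof is correct and matches the paper's own argument, which simply says the lemma is immediate from (\xb). Your additional remark about the inductive hypothesis guaranteeing that (\xb) is available for the relevant $s_j$ is accurate and helpful, though the paper does not spell it out.
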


\begin{proof}
This is immediate from (\xb).
\end{proof}

We now define $s_kx$ to be $s_js_{k-1}w$ (as required by (\xd)).

It remains to verify (\xa)--(\xd).  The verifications of (\xa)--(\xc) are
routine applications of the simplicial identities and are left to the reader.

For (\xd) we need a well-known fact:

\begin{lemma}
\label{l2}
If $s_jw=s_iy$ for some $i$ with $j<i<k$ then there is a $v$ with 
$y=s_jv$ and $w=s_{i-1}v$.
\end{lemma}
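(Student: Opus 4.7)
The plan is to guess $v := d_j y$ and then verify in turn that $w = s_{i-1} v$ and $y = s_j v$. Both verifications should use only simplicial identities among the already-constructed degeneracies $s_j,\, s_{i-1},\, s_i$; since the indices $j,\, i-1,\, i$ are all strictly less than $k$, the relevant instances of (\xa)--(\xd) for these degeneracies are covered by the inductive hypothesis, and Lemma~\ref{l1} is available.

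First I would apply $d_j$ to both sides of the hypothesis $s_j w = s_i y$. The left side collapses to $w$ by (\xb). On the right, since $j<i$, relation (\xa) rewrites $d_j s_i y$ as $s_{i-1} d_j y = s_{i-1} v$. This yields $w = s_{i-1} v$ immediately, which is one of the two required identities and simultaneously confirms that the guess $v = d_j y$ is the only one compatible with the conclusion (apply $d_j$ to $y = s_j v$ and use (\xb)).

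Next I would apply $s_j$ to the identity $w = s_{i-1} v$ just obtained. Since $j \le i-1$, the degeneracy relation (\xd) (in the equivalent form $s_j s_{i-1} = s_i s_j$) gives $s_j w = s_i s_j v$. Comparing with the hypothesis $s_j w = s_i y$ produces $s_i s_j v = s_i y$, and then Lemma~\ref{l1} (the injectivity of $s_i$) delivers $y = s_j v$, which is the second required identity.

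There is no genuine obstacle here; the only thing to watch is the bookkeeping, namely checking at each step that the particular instance of (\xa), (\xb), or (\xd) being invoked concerns a degeneracy whose index is less than $k$ so that it is already part of the inductive hypothesis. With that verified, the proof reduces to the two short manipulations above.
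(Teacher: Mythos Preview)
Your proof is correct and is essentially the same as the paper's: both set $v = d_j y$ and use (\xa), (\xb), (\xd) together with Lemma~\ref{l1}. The only difference is organizational---you obtain $w = s_{i-1}v$ first (directly, by applying $d_j$) and then $y = s_j v$, whereas the paper derives $y = s_j v$ first via a slightly longer chain and then $w = s_{i-1}v$, invoking Lemma~\ref{l1} twice instead of once.
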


\begin{proof}
Let $v=d_j y$.  Then 
\[
s_iy=s_jw=s_jd_js_jw=s_jd_js_iy=s_js_{i-1}d_jy=s_js_{i-1}v=s_is_jv,
\]
so $y=s_jv$ by Lemma \ref{l1}.  Now 
\[
s_jw=s_iy=s_is_jv=s_js_{i-1}v
\]
so $w=s_{i-1}v$ by Lemma \ref{l1}.
\end{proof}

Now we verify (\xd).  Let $y\in X_{n-1}$ and let $i<k$. Choose the smallest 
$j$ for which $s_iy$ is in the image of $s_j$. If $j=i$ we are done, 
otherwise let $s_iy=s_jw$.  Let $v$ be the element given by Lemma \ref{l2}. 
Then
\begin{align*}
s_ks_iy&=s_js_{k-1}w\quad\text{by definition of $s_k$}
\\
&=s_js_{k-1}s_{i-1}v
=s_js_{i-1}s_{k-2}v=s_is_js_{k-2}v
\\
&=s_is_{k-1}s_jv\quad\text{because $j<i<k$ so $j<k-1$}
\\
&=s_is_{k-1}y,
\end{align*}
as required.

\section{Conclusion of the proof of Theorem \ref{1}}
\label{s3}

Next we must construct $s_k x$ in the remaining case, when $x$ is not
in the image of any $s_j$ with $j<k$.

First note that the simplicial identities determine {\it all} faces of $s_kx$,
so it is not possible to build $s_kx$ directly from the Kan condition.
Instead, we apply the Kan condition twice to construct a suitable element 
$T_kx$ in 
degree $\deg(x)+2$ and then define 
\begin{equation}
\label{sT}
s_kx=d_0T_kx.
\end{equation}

We will construct the elements $T_jx$ by a double induction on $\deg(x)$ and
$j$.
In order to see what properties 
we want $T_j$ to have in the inductive hypothesis,
we use a heuristic argument.  For $i<j$, we want the simplicial 
identity (\xa)
to hold.  The left-hand side of (\xa) will be equal to 
$d_0d_{i+1}T_j$ (using \eqref{sT} and $d_id_0=d_0d_{i+1}$) and the
right-hand side will be $d_0T_{j-1}d_i$.  The simplest way for the two sides 
to be equal is to have $d_{i+1}T_j=T_{j-1}d_i$, which we rewrite as
\begin{equation}
\label{a}
d_iT_j=T_{j-1}d_{i-1}  \quad\text{if\, $0<i<j+1$};
\end{equation}
this is the first of the properties we want. 
Similarly, the simplicial identity (\xc) leads to the equation
\begin{equation}
\label{b}
d_iT_j=T_jd_{i-2} \quad\text{if\, $i>j+2$}.
\end{equation}
Finally, the simplicial identity (\xb) leads to two equations:
\begin{equation}
\label{c} 
d_{j+1}T_j=d_{j+2}T_j \quad\text{for all $j$},
\end{equation}
and
\begin{equation}
\label{d} 
d_0d_{j+1}T_jx=x \quad\text{for all $j$ and $x$}.
\end{equation}

\begin{lemma}
\label{l3}
Let $X$ be a semisimplicial set satisfying the Kan condition.  Then there are
functions
\[
T_j:X_n\to X_{n+2}, 
\]
for $n\geq 0$ and $0\leq j\leq n$,
satisfying (\ya)--(\yd).
\end{lemma}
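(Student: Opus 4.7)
The plan is to build $T_k x$ by a double induction on $n=\deg(x)$ and $k$, in parallel with the construction of $s_k$ in Sections~\ref{s2}--\ref{s3}. Fix $x \in X_n$ and $0 \leq k \leq n$, and assume inductively that $T_j y$ has been constructed and satisfies (\ya)--(\yd) whenever $\deg(y) < n$, or $\deg(y) = n$ with $j < k$. As the discussion preceding the lemma suggests, the plan is to apply the Kan condition twice.

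First, I would construct an $(n+1)$-simplex $A$ that will serve as $d_{k+1}T_k x = d_{k+2}T_k x$. Property (\yd) forces $d_0 A = x$; for $1 \leq i \leq k$, the simplicial identity $d_id_{k+1} = d_kd_i$ together with (\ya) forces $d_i A = d_k T_{k-1} d_{i-1} x$; and for $k+2 \leq m \leq n+1$, the identity $d_m d_{k+1} = d_{k+1}d_{m+1}$ together with (\yb) forces $d_m A = d_{k+1} T_k d_{m-1} x$. (The range $k+2\leq m\leq n+1$ is empty when $k=n$, so one never needs $T_n$ on $X_{n-1}$; the (\ya)-range only invokes $T_{k-1}$ on $X_{n-1}$, which is available.) Only $d_{k+1} A$ is left unspecified, and the Kan condition supplies $A$.

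Second, I would construct $T_k x \in X_{n+2}$ by leaving $d_0 T_k x$ free, setting
\begin{align*}
d_i T_k x &:= T_{k-1} d_{i-1}x \quad (1\leq i\leq k),\\
d_{k+1} T_k x &= d_{k+2} T_k x := A,\\
d_i T_k x &:= T_k d_{i-2}x \quad (k+3\leq i\leq n+2),
\end{align*}
and applying the Kan condition. Then (\ya), (\yb), (\yc) hold by construction, and (\yd) holds because $d_0 d_{k+1} T_k x = d_0 A = x$.

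The main obstacle is Kan-compatibility in the two applications: one must check that the prescribed faces satisfy $d_a d_b = d_{b-1} d_a$ for every relevant pair $a<b$. This splits into a short list of cases according to where $a,b$ sit relative to $\{k,k+1,k+2\}$, and in every case the verification reduces, via the simplicial identities on $X$, to an instance of (\ya)--(\yd) applied to $T_{k-1}$ or to $T_k$ on a simplex of degree $<n$, all of which is covered by the inductive hypothesis. The most delicate point is that the two a~priori different derivations of each $d_i A$, from $A = d_{k+1}T_k x$ versus $A = d_{k+2}T_k x$, must coincide; this follows from (\yc) applied to $T_{k-1}$ or to $T_k$ on a simplex of degree $n-1$, and it is precisely what makes the joint prescription $d_{k+1}T_k x = d_{k+2}T_k x$ consistent and allows the whole scheme to go through.
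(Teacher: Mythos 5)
Your proposal matches the paper's proof essentially step for step: the same double induction, the same two-stage application of the Kan condition (first producing the $(n+1)$-simplex $A$, which the paper calls $y$, then producing $T_k x$), and the same face prescriptions with the same indices left free ($d_{k+1}A$ and $d_0 T_k x$). The consistency verifications that you indicate "split into a short list of cases" are exactly what the paper carries out in its Lemmas \ref{l4} and \ref{l5}, so the approach is correct and not a different route.
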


The proof will be given in the next section.  We can now finish the
proof of Theorem \ref{1}.  Given that $x$ is not in the image of $s_j$ for
any $j<k$, define $s_kx$ by \eqref{sT}.  The simplicial identity
(\xa) follows at once from (\ya), and (\xc) follows from (\yb).  To see that
$d_ks_kx=x$ we use (\yd), and then $d_{k+1}s_kx=x$ follows from (\yc).
The identity (\xd) is vacuous because of the assumption on $x$.
\qed

\section{Proof of Lemma \ref{l3}}
\label{s3.5}

To see how the construction of $T_j$ works it's helpful to begin with the case 
$\deg(x)=0$ and $j=0$.  Since we want (\yc) and (\yd) to hold we must have
$d_1T_0x=d_2T_0x$ and $d_0d_1T_0x=x$.  The Kan condition gives an
element $y$ of degree 1 with $d_0 y=x$.  A second application of the Kan
condition gives $T_0x$ with $d_1T_0x=d_2T_0x=y$.  Then (\yc) and (\yd) are 
immediate from the construction, and (\ya) and (\yb) are vacuous in this case.

Now assume that $T_jx$ has been constructed with properties (\ya)--(\yd) for
$\deg(x)<n$ and also for $\deg(x)=n$ and $j<k$.  Let $x\in X_n$; we need to
construct $T_k x\in X_{n+2}$.

We begin by constructing an element $y$ of degree $n+1$ which will play the
role of $d_{k+1}T_kx$.  We want $d_0y$ to be $x$ because of (\yd). For
$0<j<k+1$ we want $d_jy$ to be $d_kT_{k-1}d_{j-1}x$ because of (\ya), and for 
$j>k+1$ we want $d_jy$ to be $d_{k+1}T_kd_{j-1}x$ because of (\yb).  In order
to apply the Kan condition we need to show that these choices are consistent:

\begin{lemma}
\label{l4}
Given $x\in X_n$, let 
\begin{equation}
\label{e2}
y_j=
\begin{cases}
x & \text{if\, $j=0$,}\\
d_kT_{k-1}d_{j-1}x & \text{if\, $0<j<k+1$,}\\
d_{k+1}T_kd_{j-1}x & \text{if\, $j>k+1$.}
\end{cases}
\end{equation}
Then
\begin{equation}
\label{e3}
d_iy_j=d_{j-1}y_i \quad\text{whenever $i<j$ with $i\neq k+1\neq j$.}
\end{equation}
\end{lemma}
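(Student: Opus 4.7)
The plan is to verify \eqref{e3} by a case analysis according to which clause of \eqref{e2} governs $y_i$ and $y_j$. Since $i<j$ with $i\neq k+1\neq j$, the pair $(i,j)$ falls into one of four regions: (A) $i=0$; (B) $0<i<j\le k$; (C) $0<i\le k$ and $j>k+1$; and (D) $k+1<i<j$. In every region each $y_\cdot$ has the form $d_\ell T_{\ell-1} d_{m-1}x$ or $d_{\ell+1} T_\ell d_{m-1} x$ with $\ell\in\{k-1,k\}$, and the $T$ is applied to a simplex of degree at most $n-1$. Consequently the inductive hypothesis of Lemma~\ref{l3} supplies properties (\ya)--(\yd) for every occurrence of $T_{k-1}$ and $T_k$ that arises in the computation.

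In Case (A), $d_0 y_j$ collapses to $d_{j-1}x=d_{j-1}y_0$ by a direct application of (\yd) (to $T_{k-1}$ when $0<j\le k$, and to $T_k$ when $j>k+1$). In Case (B) I would commute $d_i$ (respectively $d_{j-1}$) past the outer $d_k$ by \eqref{e1} and then apply (\ya) to $T_{k-1}$; both sides reduce to an expression of the shape $d_{k-1}T_{k-2}d_{\cdot-1}d_{\cdot-1}x$, and are identified by \eqref{e1} applied to $x$. Case (D) is symmetric, with (\yb) playing the role of (\ya): one commutes the large outer face past $d_{k+1}$ using \eqref{e1} in the reverse direction, then past $T_k$ using (\yb). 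Case (C) is the hybrid: on the $d_i y_j$ side one uses \eqref{e1} and then (\ya) for $T_k$, while on the $d_{j-1}y_i$ side one uses \eqref{e1} and then (\yb) for $T_{k-1}$; both sides reduce to $d_k T_{k-1}d_{\cdot-1}d_{\cdot-1}x$ and again match by \eqref{e1}.

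Property (\yc) is never invoked, which matches the hypothesis $i\neq k+1\neq j$ in \eqref{e3}: the excluded indices are precisely those in which an outer face $d_i$ or $d_{j-1}$ would hit $T_{k-1}$ at the middle indices $k,k+1$ (or $T_k$ at $k+1,k+2$), forcing an appeal to (\yc). The main obstacle in writing out the proof is purely bookkeeping: in each case one must apply \eqref{e1} in the correct direction (depending on whether the index being moved lies below or above $k$, resp.\ $k+1$) so that the $T$ operator ends up adjacent to a face whose index lies in the range where (\ya) or (\yb) applies with correctly shifted indices on both sides of \eqref{e3}. Once the four regions are organized this way, each verification is a short three-step commutation.
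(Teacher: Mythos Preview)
Your proposal is correct and follows essentially the same approach as the paper: the same four-case split $(i=0;\ 0<i<j<k+1;\ 0<i<k+1<j;\ k+1<i<j)$, and in each case the same simplification using \eqref{e1} together with (\ya), (\yb), or (\yd) to reach the common forms $d_{j-1}x$, $d_{k-1}T_{k-2}d_{j-2}d_{i-1}x$, $d_kT_{k-1}d_{j-2}d_{i-1}x$, and $d_{k+1}T_kd_{j-2}d_{i-1}x$ respectively. Your remark that (\yc) is never needed, and your explicit observation that every $T$ in \eqref{e2} is applied in degree $\le n-1$ so the inductive hypothesis applies, are both correct and worth making explicit.
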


\begin{proof}[Proof of Lemma \ref{l4}]
First suppose $i=0$.  Then the right-hand side of \eqref{e3} is equal to 
$d_{j-1}x$, and the left-hand side simplifies to $d_{j-1}x$ by 
(\yd). 

Next suppose $0<i<j<k+1$. Both sides simplify to
$d_{k-1}T_{k-2}d_{j-2}d_{i-1}x$ by (\ya).

If $0<i<k+1<j$ then both sides simplify to $d_kT_{k-1}d_{j-2}d_{i-1}x$, 
using (\ya) on the left side and (\yb) on the right side.

Finally, if $k+1<i<j$ then both sides simplify to 
$d_{k+1}T_kd_{j-2}d_{i-1}x$ by (\yb).
\end{proof}

Now we apply the Kan condition to construct $y\in X_{n+1}$ with $d_jy=y_j$
for $j\neq k+1$.

Next we will use the Kan condition to construct $T_kx$.  We choose 
$d_{k+1}T_kx$ to be the element $y$ that was just constructed, and because 
of (\yc) we also choose $d_{k+2}T_kx$ to be $y$.  For $0<j<k+1$ we want 
$d_jT_kx$ to be $T_{k-1}d_{j-1}x$ because of (\ya), and for $j>k+2$ we want 
$d_jT_kx$ to be $T_kd_{j-2}x$ because of (\yb).  

We need to check consistency:

\begin{lemma}
\label{l5}
Let 
\[
z_j=
\begin{cases}
T_{k-1}d_{j-1}x & \text{if\, $0<j<k+1$,}\\
y& \text{if\, $j$ is $k+1$ or $k+2$,}\\
T_kd_{j-2}x& \text{if\, $j>k+2$.}
\end{cases}
\]
Then
\begin{equation}
\label{e5}
d_iz_j=d_{j-1}z_i \quad\text{whenever $0<i<j$.}
\end{equation}
\end{lemma}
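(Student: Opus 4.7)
The plan is to verify \eqref{e5} by a case analysis according to where $i$ and $j$ fall in the three regimes $\{1,\dots,k\}$, $\{k+1,k+2\}$, $\{k+3,\dots,n+2\}$, which correspond to the three clauses defining $z_\bullet$. In each case I would substitute the definitions and reduce both sides, using as appropriate the identities (\ya)--(\yd) for the $T$-maps already available by induction (namely $T_{k-1}$ in degree $n$, and $T_k$ on inputs of degree $n-1$), the simplicial identity \eqref{e1}, and the relations $d_l y = y_l$ for $l\ne k+1$ supplied by the Kan extension $y$ from Lemma \ref{l4}.

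The straightforward cases go as follows. For $0 < i < j \le k$, (\ya) on each side and one use of \eqref{e1} reduce both sides to $T_{k-2} d_{j-2} d_{i-1} x$. For $k+2 < i < j$, the same pattern with (\yb) gives $T_k d_{j-3} d_{i-2} x$ on both sides (using $j \ge k+4$ to meet the index bound of (\yb) on the right). The case $i = k+1$, $j = k+2$ is trivial. For $0 < i < k+1$ and $j \in \{k+1, k+2\}$, the left side $d_i y$ equals $y_i = d_k T_{k-1} d_{i-1} x$ by Lemma \ref{l4}; this matches the right side directly when $j = k+1$, and via (\yc) applied to $T_{k-1}$ when $j = k+2$. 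For $0 < i < k+1$ and $j > k+2$, (\ya) on the left and (\yb) on the right reduce both sides to $T_{k-1} d_{j-3} d_{i-1} x$ after one use of \eqref{e1}.

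The remaining cases are $i \in \{k+1, k+2\}$ with $j > k+2$, where $z_i = y$ and $z_j = T_k d_{j-2} x$. Since $\deg(d_{j-2} x) = n-1$, the element $T_k d_{j-2}x$ is given by the inductive hypothesis; and since $j - 1 > k+1$, Lemma \ref{l4} and \eqref{e2} yield
\[
d_{j-1} y \;=\; y_{j-1} \;=\; d_{k+1} T_k d_{j-2} x .
\]
For $i = k+1$ this already equals the left side; for $i = k+2$, (\yc) applied to $T_k$ rewrites $d_{k+2} T_k d_{j-2} x = d_{k+1} T_k d_{j-2} x$, so the two sides again agree.

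The main obstacle is purely bookkeeping: checking the index restrictions on (\ya) and (\yb) in each subcase. Conceptually, the only non-automatic point is that the last two cases above do not follow from any single identity among (\ya)--(\yd) but rather from the defining formula \eqref{e2} for the face of $y$ opposite the missing $(k+1)$-st.
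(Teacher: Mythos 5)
Your proof is correct and follows essentially the same case analysis as the paper, using the same identities in the same ways. One small point in your favor: the paper's enumeration omits the subcase $i=k+1,\ j>k+2$ (its clauses only cover $i<k+1$, $i=k+1$ with $j=k+2$, $i=k+2$, and $i>k+2$), whereas you treat it explicitly, correctly noting that the left side $d_{k+1}T_kd_{j-2}x$ equals $y_{j-1}=d_{j-1}y$ with no appeal to (\yc) needed.
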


\begin{proof}[Proof of Lemma \ref{l5}]
First suppose $j<k+1$.  Both sides of \eqref{e5} simplify to
$T_{k-2}d_{j-2}d_{i-1}x$ by (\ya).

If $j=k+1$ then the right side is $d_kT_{k-1}d_{i-1}x$, and the left side
simplifies to this by \eqref{e2}.

If $i<k+1$ and $j=k+2$ both sides simplify to $d_kT_{k-1}d_{i-1}x$, using
\eqref{e2} on the left and (\yc) on the right.

If $i<k+1$ and $j>k+2$ both sides simplify to $T_{k-1}d_{j-3}d_{i-1}x$,
using (\ya) on the left and (\yb) on the right.

If $i=k+1$ and $j=k+2$ both sides are equal to $d_{k+1}y$.

If $i=k+2$ both sides simplify to $d_{k+1}T_kd_{j-2}x$, using (\yc) on the
left and \eqref{e2} on the right.

If $i>k+2$ both sides simplify to $T_kd_{j-3}d_{i-2}x$ using (\yb).
\end{proof}

Now we apply the Kan condition to construct $T_kx$ with $d_jT_kx=z_j$ for
$j\neq 0$. All parts of the inductive hypothesis are true by construction.
\qed 

\section{The \ms\ analogue of Theorem \ref{1}}
\label{s4}

\begin{definition}
Let $\ell\geq 1$.

(i) An $\ell$-fold multi-index $\bf n$ is a sequence of nonnegative integers 
$n_1,\ldots,n_\ell$.  

(ii) For $1\leq p\leq \ell$ let $\bfe_p$ be the $\ell$-fold multi-index with 
1 in the $p$-th position and 0 in all other positions.

(iii) Addition of $\ell$-fold multi-indices is degreewise addtion and 
similarly for subtraction.

(iv) An $\ell$-fold multisemisimplicial set is a collection of sets $X_\bfn$ 
indexed by the $\ell$-fold multi-indices, with maps
\[
d_i^p: X_\bfn\to X_{\bfn-\bfe_p}
\]
for $1\leq p\leq \ell$ and $0\leq i\leq n_p$,
such that 
\begin{alignat*}{2}
d_i^pd_j^p&=d_{j-1}^pd_i^p &\quad&\text{when $i<j$, and}\\
d_i^pd_j^q&=d_j^qd_i^p &\quad&\text{when $p\neq q$.}
\end{alignat*}
\end{definition}

The Kan condition for multisemisimplicial sets is analogous to that for
semisimplicial sets: a simplex can be constructed from a consistent choice of
all but one of its faces.  Here is the formal definition.

\begin{definition}
Let $X$ be an $\ell$-fold \ms\ set.  $X$ satisfies the Kan condition if, for 
every choice of a multi-index $\bf n$, a pair $(r,k)$ with
$1\leq r\leq \ell$ and $0\leq m\leq n_r$, and elements $x^p_i\in 
X_{\bfn-\bfe_p}$ for $(p,i)\neq(r,k)$ satisfying
\[
d_i^px_j^p=d_{j-1}^px_i^p \quad\text{for $i<j$}
\]
and
\[
d_i^px_j^q=d_j^qx_i^p \quad\text{for $p\neq q$},
\]
there exists an element $x\in X_\bfn$ with $d^p_ix=x^p_i$ for $(p,i)\neq
(r,k)$.
\end{definition}

The analogue of Theorem \ref{1} says that a \ms\ set satisfying the Kan
condition has a multisimplicial structure.  Here is the formal statement.

\begin{thm}
\label{2}
Let $X$ be an $\ell$-fold \ms\ set satisfying the Kan condition.  Then there 
are functions
\[
s^q_j:X_\bfn\to X_{\bfn+\bfe_q}
\]
for $0\leq j\leq n_q$, with the following properties.
\begin{alignat}{2}
\label{za}
d_i^qs_j^q&=s_{j-1}^qd_i^q &\quad&\textrm{if\,  $i<j$}.\\
\label{zb}
d_i^qs_j^qx&=x &\quad&\textrm{if\,  $i=j,j+1$}.\\
\label{zc}
d_i^qs_j^q&=s_j^qd_{i-1}^q &\quad&\textrm{if\,  $i>j+1$}.\\
\label{zd}
s_j^qs_i^q&=s_i^qs_{j-1}^q &\quad&\textrm{if\,  $i<j$}. \\
\label{ze}
d_i^ps_j^q&=s_j^qd_i^p &\quad&\text{whenever $p\neq q$}. \\
\label{zf}
s_i^ps_j^q&=s_j^qs_i^p &\quad&\text{whenever $p\neq q$}. 
\end{alignat}
\end{thm}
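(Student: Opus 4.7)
The plan is to mirror the double induction used in Sections \ref{s2} and \ref{s3}, enlarged to accommodate the multidirectional structure. Fix a total order on the pairs $(r,k)$ with $1\leq r\leq\ell$ and $0\leq k\leq n_r$, say lexicographic in $(r,k)$. I would do a double induction: the outer induction on the total degree $|\bfn|=n_1+\cdots+n_\ell$, and the inner induction on $(r,k)$, constructing $s^r_k x$ for $x\in X_\bfn$ under the assumption that all $s^q_j$ on inputs of smaller total degree, or on equal-degree inputs with $(q,j)<(r,k)$, have already been built and satisfy the relevant instances of (\za)--(\zf).

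For the inductive step one splits into two cases exactly as in Section \ref{s2}. In the first case, $x$ lies in the image of some already-constructed degeneracy. Writing $x=s^q_i w$, one sets $s^r_k x = s^r_i s^r_{k-1} w$ if $q=r$ (so $i<k$ by the order), mimicking Section \ref{s2} and forcing (\zd); and $s^r_k x = s^q_i s^r_k w$ if $q\neq r$, which is available by the outer induction since $w$ has smaller total degree, and which forces (\zf). Well-definedness requires a multidirectional analogue of Lemma \ref{l2}: if $x=s^q_i w=s^{q'}_{i'} w'$ with $(q',i')\neq(q,i)$, then a common predecessor $v$ can be extracted by applying an appropriate face to one of the representations and invoking the commutation identities already in hand (the cross-direction case uses (\ze) for smaller pairs and reduces almost immediately; the same-direction case reduces to Lemma \ref{l2} applied within direction $r$). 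Granting this, (\za)--(\zc) and (\ze) then follow from routine simplicial manipulations paralleling Section \ref{s2}.

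In the second case, $x$ is not in the image of any previously constructed degeneracy, so (\zd) and (\zf) are vacuous at $(r,k,x)$. I would generalize Lemma \ref{l3}: construct an auxiliary element $T^r_k x\in X_{\bfn+2\bfe_r}$ satisfying analogues of (\ya)--(\yd) for direction $r$, together with the cross-direction commutations $d^p_i T^r_k = T^r_k d^p_i$ for $p\neq r$. The construction uses two nested applications of the multisemisimplicial Kan condition, as in Section \ref{s3.5}: first build $y=d^r_{k+1}T^r_k x$ by specifying all of its faces (in every direction), then build $T^r_k x$ itself by specifying all but one of its $r$-faces and all of its cross-direction faces. One then sets $s^r_k x = d^r_0 T^r_k x$; the identities (\za)--(\zc) follow from the analogues of (\ya)--(\yd) exactly as in Section \ref{s3}, and (\ze) follows from the cross-direction commutation of $T^r_k$ together with $d^p_i d^r_0 = d^r_0 d^p_i$.

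The main obstacle is the consistency check for the two horns used to construct $T^r_k x$. Each horn has prescribed faces indexed by pairs $(p,i)$, and one must verify the appropriate agreement on iterated faces for every allowed pair $((p,i),(q,j))$. The verification splits into subcases according to whether each of $p$ and $q$ equals $r$, so the number of cases roughly quadruples compared with Lemmas \ref{l4} and \ref{l5}; however, each individual case reduces to one of the inductively established identities in exactly the same manner, by first applying an \ya- or \yb-style identity in the $r$-direction and then invoking a face--face, face--degeneracy, or degeneracy--degeneracy commutation across directions. Once consistency is established, both applications of the Kan condition go through formally and all six identities of Theorem \ref{2} follow.
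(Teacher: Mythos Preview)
Your proposal is correct and follows essentially the same approach as the paper. The paper organizes the inductive step into three cases (Case~1: $x$ is in the image of $s^q_j$ for some $q<r$; Case~2: not Case~1 but $x$ is in the image of $s^r_j$ for some $j<k$; Case~3: neither), whereas you fold the first two into a single case with two subcases and frame the analogue of Lemmas~\ref{l2} and~\ref{l6} as a well-definedness statement rather than making the canonical (smallest) choice; but the definitions, the auxiliary $T^r_k$ construction with its cross-direction commutation (the paper's Lemma~\ref{l7}), and the verifications are otherwise identical.
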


\begin{remark}
\label{r1}
Notice that to prove (\zf) it suffices (by symmetry) to prove that the 
equation holds for $p>q$.
\end{remark}

The rest of this paper gives the proof of Theorem \ref{2}, which is quite
similar to that of Theorem \ref{1}.
For an element $x\in X_\bfn$, let us define the total degree $|x|$ to be 
$n_1+\ldots+n_\ell$.  We will construct the degeneracies $s^q_j$ by a double 
induction on $|x|$ and the pair $(q,j)$; we order the pairs $(q,j)$ using the 
lexicographic order.

Let $m\geq 0$, $1\leq r\leq \ell$ and $k\geq 0$.
Assume that $s^q_jx$ has been constructed when $|x|<m$, and also
also when $|x|$ is equal to $m$ and
the pair $(q,j)$ is less than $(r,k)$ in the lexicographic order, and that
properties (\za)--(\zf) hold in all relevant cases (that is, in all cases
involving only degeneracies that have already been constructed).  Let $x\in
X_\bfn$ with $n_1+\ldots+n_\ell=m$ and suppose $0\leq k\leq n_r$;
we need to construct $s_k^r x$.

There are three cases, which are dealt with in Sections \ref{s5}--\ref{s7}:

\begin{description}
\item[Case 1] $x$ is in the image of $s_j^q$ for some $q<r$.

\item[Case 2] $x$ is not in the image of $s_j^q$ for $q<r$, but it is in the
image of $s_j^r$ for some $j<k$.

\item[Case 3] $x$ is not in the image of $s_j^q$ when $(q,j)<(r,k)$. 
\end{description}

\section{Proof of Theorem \ref{2}: Case 1}
\label{s5}

Let $(q,j)$ be the smallest pair with $x$ in the image of
$s_j^q$; then
\[
x=s_j^qw
\]
for a unique $w$.  Define
\[
s_k^rx=s_j^qs_k^rw
\]
(as required by (\zf)).   We need to verify (\za)--(\zf).

For property (\za), both sides simplify to $s_j^qs_{k-1}^rd_i^rw$, using (\ze)
and (\za) on the left and (\ze) and (\zf) on the right.  The verifications 
for (\zb) and (\zc) are similar. 

For (\zd) we need:

\begin{lemma}
\label{l6}
If $s_j^qw=s_i^py$ with $p\neq q$ then there is a $v$ with $y=s_j^qv$ and
$w=s_i^pv$.
\end{lemma}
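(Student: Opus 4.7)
The plan is to mimic the proof of Lemma \ref{l2}, with the single-direction simplicial identities replaced by the cross-direction commuting relations (\ze) and (\zf), which are precisely what is available when $p\neq q$. I would set $v := d_j^q y$ and then derive the two required equations in turn.

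For the first, apply $d_j^q$ to both sides of $s_j^q w = s_i^p y$. On the left, (\zb) collapses $d_j^q s_j^q$ to the identity, giving $w$. On the right, (\ze) pushes $d_j^q$ past $s_i^p$ (since $p\neq q$), yielding $s_i^p d_j^q y = s_i^p v$. Hence $w = s_i^p v$. For the second, apply $s_j^q$ to $w = s_i^p v$ and use (\zf) to rewrite $s_j^q s_i^p v = s_i^p s_j^q v$; combining with $s_j^q w = s_i^p y$ gives $s_i^p y = s_i^p s_j^q v$, and the multisemisimplicial analogue of the monomorphism lemma (Lemma \ref{l1}), which is again the formal consequence of (\zb), forces $y = s_j^q v$.

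The only thing to check carefully is that every identity invoked -- (\zb), (\ze), (\zf), and injectivity of $s_i^p$ -- is genuinely available in the current inductive state. Since $w$ and $y$ have total degree strictly less than $|x|$, the degeneracies $s_j^q$ and $s_i^p$ that appear in the argument were constructed at earlier stages of the double induction, and so the full package of properties (\za)--(\zf) holds for them by the inductive hypothesis. No real obstacle should arise: the proof is a direct formal translation of Lemma \ref{l2}, with the role of within-direction simplicial relations played by the new cross-direction identities.
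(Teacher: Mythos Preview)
Your proof is correct and is essentially the same as the paper's: both set $v=d_j^q y$ and use (\zb), (\ze), (\zf) together with injectivity of degeneracies. The only cosmetic difference is the order---you derive $w=s_i^p v$ first (by applying $d_j^q$ directly) and then $y=s_j^q v$, while the paper runs the chain the other way around.
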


\begin{proof}[Proof of Lemma \ref{l6}]
The proof is similar to that of Lemma \ref{l2}.
Let $v=d_j^qy$.
Then
\[
s_i^py=s_j^qw=s_j^qd_j^qs_j^qw
=s_j^qd_j^qs_i^py=s_j^qs_i^pd_j^qy=s_j^qs_i^pv
=s_i^ps_j^qv,
\]
so $y=s_j^qv$.
Now
\[
s_j^qw=s_i^py=s_i^ps_j^qv=s_j^qs_i^pv
\]
so $w=s_i^pv$.
\end{proof}

Now we verify (\zd).  Let $y\in X_{\bfn-\bfe_r}$ and let $i<k$. Suppose that
$s_i^ry$ is in the image of $s_j^q$ for some $q<r$, and choose the smallest
such pair $(q,j)$;  then
$s_i^ry=s_j^qw$ for some $w$.  Let $v$ be the element given by Lemma \ref{l6}.
Then 
\begin{align*}
s_k^rs_i^ry
&=
s_j^qs_k^rw \quad\text{by definition of $s_k^r$}\\
&=s_j^qs_k^rs_i^rv
=s_j^qs_i^rs_{k-1}^rv\quad\text{by (\zd)}\\
&=s_i^rs_{k-1}^rs_j^qv=s_i^rs_{k-1}^ry
\end{align*}
as required.

For property (\ze), the left side is $d_i^ps_j^qs_k^rw$, and the right side
simplifies to this, using (\ze) and (\zf) when $p\neq q$, (\zc) when $p=q$ and
$i$ is $j$ or $j+1$, and (\ze), (\zf) and (\za) (resp., (\zb)) when $p=q$ and
$i<j$ (resp., $i>j+1$). 

For property (\zf), we want to know (using Remark \ref{r1}) that 
$s_k^rs_i^py=s_i^ps_k^ry$ when $r>p$, 
$y\in X_{\bfn-\bfe_p}$, and 
$s_i^py$ is in the image of $s_j^q$ for some $q<r$. Choose the smallest
such pair $(q,j)$;  then
$s_i^py=s_j^qw$ for some $w$. If $p\neq q$ the result follows easily from Lemma
\ref{l6} and (\zf);
if $p=q$ the proof is similar to that for
(\xd) in Section \ref{s2}.

\section{Proof of Theorem \ref{2}: Case 2}
\label{s6}

Choose the smallest $j$ for which $x$ is in the image of
$s_j^r$; then
\[
x=s_j^rw
\]
for a unique $w$.  Define
\[
s_k^rx=s_j^rs_{k-1}^rw
\]
(as required by (\zd)).   We need to verify (\za)--(\ze); (\zf) is not relevant
for this Case.

The proofs of (\za)--(\zd) are essentially the same as the proofs of 
(\xa)--(\xd) in Section \ref{s2}.

For (\ze), both sides simplify to $s_j^rs_{k-1}^rd_i^pw$, using (\ze) on the
left and (\ze) and (\zd) on the right.

\section{Proof of Theorem \ref{2}: Case 3}
\label{s7}

The argument for this Case is similar to that given in Sections \ref{s3} and
\ref{s3.5}.

\begin{lemma}
\label{l7}
Let $X$ be an $\ell$-fold \ms\ set satisfying the Kan condition.  Then there
are functions
\[
T^q_j:X_\bfn\to X_{\bfn+2\bfe_q}
\]
for $0\leq j\leq n_q$, with the following properties.
\begin{alignat}{2}
\label{wa}
d_i^qT_j^q&=T_{j-1}^qd_{i-1}^q  &\quad&\text{if\, $0<i<j+1$.}\\
\label{wb}
d_i^qT_j^q&=T_j^qd_{i-2}^q &\quad&\text{if\, $i>j+2$.}\\
\label{wc}
d_{j+1}^qT_j^q&=d_{j+2}^qT_j^q &\quad&\text{for all $j$.}\\
\label{wd}
d_0^qd_{j+1}^qT_j^qx&=x &\quad&\text{for all $j$ and $x$.}\\
\label{we}
d_i^pT_j^q&=T_j^qd_i^p &\quad&\text{whenever $p\neq q$.}
\end{alignat}
\end{lemma}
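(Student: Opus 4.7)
The plan is to imitate Section \ref{s3.5}, doing a double induction on the total degree $|x|$ and on the pair $(q,j)$ in lexicographic order, while carrying the new commutativity condition (\ref{we}) alongside (\ref{wa})--(\ref{wd}) through every stage. The base case $|x|=0$ forces $\bfn$ to be the zero multi-index, so on $X_{2\bfe_q}$ there are no face maps in directions $p\neq q$; condition (\ref{we}) is vacuous and the construction of $T_0^q x$ is the two successive applications of the Kan condition described at the start of Section \ref{s3.5}.

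For the inductive step defining $T_k^r x$ with $x\in X_\bfn$ and $|x|=m$, I would first construct an auxiliary element $y\in X_{\bfn+\bfe_r}$ destined to be $d_{k+1}^r T_k^r x$. Its $r$-direction faces are prescribed by the direct analogue of \eqref{e2}, leaving $d_{k+1}^r y$ free; its faces in directions $p\neq r$ are prescribed by (\ref{we}) as $d_i^p y = d_{k+1}^r T_k^r d_i^p x$, which is available by induction since $|d_i^p x|<m$. Then I would define $T_k^r x$ by a second application of the Kan condition, prescribing the $r$-direction faces as in Lemma \ref{l5} (so that $d_0^r T_k^r x$ is the missing face) and $d_i^p T_k^r x = T_k^r d_i^p x$ for $p\neq r$, again available by induction.

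What remains are the consistency checks needed before each Kan condition can be applied. The purely $r$-directional checks reproduce the case analyses of Lemmas \ref{l4} and \ref{l5} verbatim. The new cases split into two types: one face in direction $r$ and one in direction $p\neq r$, or both faces in directions other than $r$. In both types the multisemisimplicial identity $d_i^p d_j^q = d_j^q d_i^p$ brings all external $d$'s to act on $x$, after which inductive instances of (\ref{wa})--(\ref{we}) on lower-degree elements supply the equality.

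The main obstacle is bookkeeping: in the second stage the case analysis has three axes (position of $i$ relative to $0, k+1, k+2$; position of $j$; whether each face direction is $r$), and one must verify in every sub-case that the $T$'s invoked lie strictly below $(r,k)$ in the (total degree, lex pair) order, so that the inductive hypothesis legitimately applies. No ingredient beyond carrying (\ref{we}) as a companion to (\ref{wa})--(\ref{wd}) is needed.
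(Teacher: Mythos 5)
Your proposal matches the paper's proof essentially line for line: the same two-stage Kan construction (first $y$ playing the role of $d_{k+1}^r T_k^r x$, then $T_k^r x$ itself), with the $r$-direction faces prescribed exactly as in Lemmas \ref{l4} and \ref{l5} and the off-direction faces prescribed by the requirement \eqref{we} applied to lower total degree, and the same treatment of the new consistency cases by commuting $d_i^p$ past everything via the multisemisimplicial identities. The paper likewise leaves the case-by-case consistency verification as a "routine modification" of Section \ref{s3.5}, so your level of detail is comparable; no substantive gap.
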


Assuming the lemma, we complete the proof of Theorem \ref{2}.  Given that 
$x$ is not in the image of $s_j^q$ for any pair $(q,j)<(r,k)$, we define 
\[
s_k^rx=d_0^rT_j^rx.
\]
The proofs of (\za)--(\zc) are the same as the proofs of (\xa)--(\xc) at the
end of Section \ref{s3}, and (\zd) is not relevant for this Case.  Property
(\ze) is immediate from \eqref{we}, and (\zf) is not relevant for this Case.
\qed

It remains to prove Lemma \ref{l7}. 
The proof is similar to that of Lemma
\ref{l3} in Section \ref{s3.5}. 

First we define $y_j^q$ for $(q,j)\neq(r,k+1)$ by
\[
y_j^q=
\begin{cases}
x & \text{if\, $q=r$ and $j=0$,}\\
d_k^rT_{k-1}^rd_{j-1}^rx & \text{if\, $q=r$ and $0<j<k+1$,}\\
d_{k+1}^rT_k^rd_{j-1}^rx & \text{if\, $q=r$ and $j>k+1$,}\\
d_{k+1}^rT_k^rd_j^qx & \text{if\, $q\neq r$.}
\end{cases}
\]
The verification that these are consistent is a routine modification of the
argument in Section \ref{s3.5} and is left to the reader.

The Kan condition gives an element $y$ with $d_j^qy=y_j^q$ for
$(q,j)\neq(r,k+1)$.

Now define $z_j^q$ for $(q,j)\neq (r,0)$ by
\[
z_j^q=
\begin{cases}
T_{k-1}^rd_{j-1}^rx & \text{if\, $q=r$ and $0<j<k+1$,}\\
y& \text{if\, $q=r$ and $j$ is $k+1$ or $k+2$,}\\
T_k^rd_{j-2}^rx& \text{if\, $q=r$ and $j>k+2$,}\\
T_k^rd_j^qx& \text{if\, $q\neq r$.}
\end{cases}
\]
Again, the verification that these are consistent is a routine modification of
the corresponding argument in Section \ref{s3.5}.

Now the Kan condition gives an element $T_k^rx$ with $d_j^qT_k^rx=z_j^q$, and
properties \eqref{wa}--\eqref{we} are immediate from the construction.
\qed

\section{A result of Rourke and Sanderson}
\label{s8}

As in \cite{RS}, we will use $|\ |$ for the geometric realization of a
semisimplicial set and $|\ |_M$ for the geometric realization of a
simplicial set (the M stands for Milnor).  

The following result is Corollary
5.4 of \cite{RS}.

\begin{prop}
\label{p1}
Let $Z\to W$
be a semisimplicial inclusion with the property that $|Z|$ is a retract of
$|W|$.  
Let $X$ be a semisimiplicial set satisfying the Kan condition, and let $f:Z\to
X$ be any semisimplicial map.  Then $f$ extends to $W$.
\end{prop}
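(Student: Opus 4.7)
The plan is to use Theorem \ref{1} to equip $X$ with a simplicial structure, and then reduce the extension problem to a standard question about a Kan simplicial set, where the retract hypothesis supplies an extension up to homotopy and the simplicial homotopy extension property upgrades this to an on-the-nose extension.

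First I would apply Theorem \ref{1} to put a simplicial structure on $X$, so that $X$ becomes a Kan simplicial set. Let $F$ denote the left adjoint to the forgetful functor from simplicial sets to semisimplicial sets. By adjunction, $f$ corresponds to a simplicial map $\tilde f : F(Z) \to X$, and a semisimplicial extension of $f$ to $W$ is the same datum as a simplicial extension of $\tilde f$ to $F(W)$. Since the non-degenerate simplices of $F(Y)$ are exactly the simplices of $Y$, the Milnor realization satisfies $|F(Y)|_M \cong |Y|$ canonically; so the given retraction $r : |W| \to |Z|$ is also a retraction $|F(W)|_M \to |F(Z)|_M$, and the continuous composite $|\tilde f|_M \circ r : |F(W)|_M \to |X|_M$ extends $|\tilde f|_M$.

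Next I would invoke the classical fact that for a Kan simplicial set $X$ and any simplicial set $B$, the natural map
\[
[B,X]_{\mathrm{simp}} \longrightarrow [|B|_M, |X|_M]_{\mathrm{top}}
\]
is a bijection (this follows from the unit $X \to \mathrm{Sing}\,|X|_M$ being a weak equivalence of Kan complexes). Applied with $B = F(W)$, this produces a simplicial map $\tilde g : F(W) \to X$ whose realization is topologically homotopic to $|\tilde f|_M \circ r$. Restricting to $F(Z)$ and applying the bijection with $B = F(Z)$, the maps $\tilde g|_{F(Z)}$ and $\tilde f$ have homotopic realizations and are therefore simplicially homotopic via some $H : F(Z) \times \Delta^1 \to X$ with $H|_{F(Z) \times \{0\}} = \tilde g|_{F(Z)}$ and $H|_{F(Z) \times \{1\}} = \tilde f$.

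Finally I would invoke the simplicial homotopy extension property. The inclusion $(F(Z) \times \Delta^1) \cup (F(W) \times \{0\}) \hookrightarrow F(W) \times \Delta^1$ is anodyne, being the pushout-product of the cofibration $F(Z) \hookrightarrow F(W)$ with the anodyne map $\{0\} \hookrightarrow \Delta^1$; since $X$ is Kan, the combined data of $H$ and $\tilde g$ extends to a simplicial homotopy $\tilde H : F(W) \times \Delta^1 \to X$. Then $\tilde H|_{F(W) \times \{1\}}$ is a simplicial extension of $\tilde f$, whose adjoint under $F$ is a semisimplicial map $W \to X$ extending $f$. The essential input, and the reason this approach works at all, is Theorem \ref{1}: it is exactly the simplicial structure on $X$ that activates the bijection between simplicial and topological homotopy classes; everything else is a formal transport of the retract hypothesis through this dictionary.
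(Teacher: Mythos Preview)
Your argument is correct, but it proceeds differently from the paper. Both proofs begin by applying Theorem \ref{1} and passing to the free simplicial set (your $F$ is the paper's $G$), so that the problem becomes extending a simplicial map $\bar f:GZ\to X$ over $GW$ with $X$ Kan. From there the paper takes a more direct route: it proves that the natural map $\bar\alpha:GZ\to S|Z|$ is a trivial cofibration, lifts $\bar f$ against it to get $\phi:S|Z|\to X$, and then the composite $W\to GW\xrightarrow{\bar\alpha}S|W|\xrightarrow{Sr}S|Z|\xrightarrow{\phi}X$ already restricts to $f$ on the nose, with no homotopy correction needed. Your version instead produces an extension only up to simplicial homotopy via the bijection $[B,X]_{\mathrm{simp}}\cong[|B|_M,|X|_M]_{\mathrm{top}}$ and then repairs it with the homotopy extension property. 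The paper's approach buys a one-shot strict extension at the cost of the auxiliary Lemma on $\bar\alpha$; your approach trades that lemma for a standard HEP argument, which is slightly longer but uses only off-the-shelf facts about Kan complexes.
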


The rest of this section gives a new proof of this result.

First we need some notation.  The singular complex functor from topological 
spaces to simplicial sets will be denoted by $S$. As in \cite{RS}, 
the free functor from semisimplicial to simplicial sets will be denoted by $G$;
recall that an element of $(GY)_n$ is a pair $(\lambda,y)$, where $y\in Y_p$
for some $p\leq n$ and 
$\lambda:\Delta^n\to\Delta^p$ is a degeneracy or (if $p=n$) the identity map.

There is a natural map of semisimplicial sets 
\[
\alpha:Y\to S|Y|
\]
which takes $y\in Y_n$ to the function $h_y:\Delta^n\to |Y|$ defined by
$h_y(u)=[u,y]$ (where $[u,y]$ denotes the class of $(u,y)$ in $|Y|$).
This extends to a natural map of simplicial sets
\[
\bar{\alpha}:GY\to S|Y|.
\]

\begin{lemma}
\label{l8}
$\bar{\alpha}$ is a trivial cofibration.
\end{lemma}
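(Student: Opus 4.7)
The plan is to treat the two halves of ``trivial cofibration'' separately: first that $\bar{\alpha}$ is a monomorphism of simplicial sets (which is what cofibration means in the standard model structure), and second that it is a weak equivalence.

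For the monomorphism part I would argue directly from the description of $GY$. Each $n$-simplex $(\lambda,y)$ of $GY$, with $y\in Y_p$ and $\lambda:\Delta^n\to\Delta^p$ a degeneracy (or the identity when $p=n$), is sent by $\bar{\alpha}$ to the singular simplex $u\mapsto[\lambda(u),y]$. Since $Y$ has no degeneracies, every $y\in Y_p$ contributes a genuine open $p$-cell to the CW complex $|Y|$, and the characteristic map $\Delta^p\to|Y|$ is injective on the interior. Consequently, from the composite $\Delta^n\xrightarrow{\lambda}\Delta^p\to|Y|$ one recovers $y$ as the unique simplex whose open cell meets the image of the interior of $\Delta^n$, and then recovers $\lambda$ from injectivity on the interior of $\Delta^p$. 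This gives injectivity of $\bar{\alpha}$ at each level.

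For the weak-equivalence part, the plan is to identify $\bar{\alpha}$ with the unit of the $(|\cdot|_M,S)$ adjunction applied to the simplicial set $GY$, since that unit is known to be a weak equivalence for every simplicial set. The crucial ingredient is a natural homeomorphism $|GY|_M\cong |Y|$: in Milnor's realization the degeneracy relation identifies $[(\lambda,y),u]$ with $[y,\lambda(u)]$, which produces a continuous bijection onto $|Y|$, and this is a homeomorphism because the non-degenerate simplices of $GY$ are precisely the simplices of $Y$, so the two sides carry the same CW structure. Under this identification the map $\bar{\alpha}$ becomes the adjunction unit $GY\to S|GY|_M$, and we are done.

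The main obstacle I expect is purely bookkeeping: verifying that the identification $|GY|_M\cong|Y|$ is an honest homeomorphism rather than merely a continuous bijection, and, more importantly, that it intertwines $\bar{\alpha}$ with the adjunction unit. Both facts are standard once one unwinds the definitions, but they are what the proof actually hinges on, so the content of Lemma \ref{l8} essentially reduces to checking these naturality statements.
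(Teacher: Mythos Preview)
Your proposal is correct and matches the paper's proof almost exactly: the injectivity argument is the same (pick an interior point of $\Delta^n$ to recover $y$, then recover $\lambda$ by continuity), and the weak-equivalence argument rests on the same key identification $|GY|_M\cong|Y|$. The only cosmetic difference is that the paper packages the second half as a commutative triangle comparing $|\bar{\alpha}|_M$ with the counit $|S|Y||_M\xrightarrow{\simeq}|Y|$, whereas you phrase it as identifying $\bar{\alpha}$ with the unit $GY\to S|GY|_M$; these are adjoint formulations of the same fact.
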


\begin{proof}[Proof of Lemma \ref{l8}]
Recall that a cofibration of simplicial sets is a 1-1 map.  To see that
$\bar{\alpha}$ is a cofibration, suppose that 
$\bar{\alpha}(\lambda,y)=\bar{\alpha}(\lambda',y')$.  Then
$[\lambda(u),y]=[\lambda'(u),y']$ for every $u$ in the relevant $\Delta^n$.  
Choosing $u$ to be an interior point, we see that $y=y'$.  It follows that 
$\lambda=\lambda'$ on the interior of $\Delta^n$, and hence, by continuity, 
on all of $\Delta^n$.

Next we must show that $\bar{\alpha}$ induces a weak equivalence of
realizations.  For this it suffices to note that the following diagram
commutes:
\[
\xymatrix{
|GY|_M
\ar[r]^{|\bar{\alpha}|}
\ar[d]_\cong
&
|S|Y||_M
\ar[ld]^\simeq
\\
|Y|
&
}
\]
\end{proof}

Now let $Z$, $W$, $X$ and $f$ be as in the statement of Proposition \ref{p1}.
Use Theorem \ref{1} to 
give $X$ a compatible simplicial structure.  
Then the map $f:Z\to X$ extends to a map $\bar{f}:GZ\to X$ of simplicial sets.
Since $\bar{\alpha}$ is a trivial
cofibration of simplicial sets and $X$ is a Kan simplicial set, there is a 
map $\phi:S|Z|\to X$ with $\phi\circ\bar{\alpha}=\bar{f}$.

Next let $r:|W|\to|Z|$ be a retraction and let $g:W\to X$ be the composite
\[
W\to GW\xrightarrow{\bar{\alpha}}
S|W|\xrightarrow{Sr}
S|Z|\xrightarrow{\phi} X.
\]
The commutativity of the following diagram shows that $g$ restricts to $f$ as
required.
\[
\xymatrix{
W
\ar[r]
&
GW
\ar[r]^{\bar{\alpha}}
&
S|W|
\ar[r]^{Sr}
&
S|Z|
\ar[r]^\phi
&
X
\\
Z
\ar[r]
\ar[u]
&
GZ
\ar[u]
\ar[r]^{\bar{\alpha}}
&
S|Z|
\ar[u]
\ar[ru]_=
&&
}
\]

\qed

\smallskip



\providecommand{\bysame}{\leavevmode\hbox to3em{\hrulefill}\thinspace}
\providecommand{\MR}{\relax\ifhmode\unskip\space\fi MR }
\providecommand{\MRhref}[2]{%
  \href{http://www.ams.org/mathscinet-getitem?mr=#1}{#2}
}
\providecommand{\href}[2]{#2}

\end{document}